\documentclass[11pt,letterpaper,reqno]{amsart} 

\usepackage{tikz}
\usetikzlibrary{positioning, shapes.geometric, arrows.meta}
\usepackage{amssymb}
\usepackage{amsmath}
\usepackage{amsthm}
\usepackage{amsfonts}
\usepackage{bbm}
\usepackage{enumitem}
\usepackage{pgfplots}
\pgfplotsset{compat=1.18}
\usepackage{booktabs}

\usepackage{graphicx}
\usepackage[T1]{fontenc}
\usepackage{doi}
\usepackage{float}
\usepackage{bookmark}
\usepackage{hyperref}
\hypersetup{pdfstartview={FitH}}

\usepackage{mathtools}

\numberwithin{equation}{section}

\newtheorem{thm}{Theorem}[section]
\newtheorem{lem}[thm]{Lemma}
\newtheorem{prop}[thm]{Proposition}
\newtheorem{cor}[thm]{Corollary}

\newtheorem{ques}[thm]{Question}
\newtheorem{conj}[thm]{Conjecture}

\theoremstyle{definition}

\newtheorem{rem}[thm]{Remark}

\DeclareMathOperator{\Tr}{Tr}
\DeclareMathOperator{\Ker}{Ker}

\DeclareMathOperator{\diag}{diag}

\newcommand{\C}{\mathbb{C}}
\newcommand{\mnc}{\mathbb{M}_n(\mathbb{C})}
\newcommand{\comm}[2]{\left[#1,#2\right]}

\begin{document}
	\title[Aluthge transforms and Hilbert--Schmidt self-commutators]%
	{On a conjecture of $\lambda$-Aluthge transforms and Hilbert--Schmidt self-commutators}
	
	\author[T.~Zhang]{Teng Zhang}
	\address{School of Mathematics and Statistics, Xi'an Jiaotong University, Xi'an 710049, P. R. China}
	\email{teng.zhang@stu.xjtu.edu.cn}
	
\subjclass[2020]{47B20, 15A60, 47A30, 47A63}
	\keywords{$\lambda$-Aluthge transform, self-commutator, Frobenius norm, Heinz-type deviation inequality}
	
	\begin{abstract}
Let $A$ be a complex square matrix, and write its polar decomposition as $A=U|A|$. For $0<\lambda<1$,
the $\lambda$-Aluthge transform of $A$ is defined by
\[
\Delta_\lambda(A)=|A|^\lambda U|A|^{1-\lambda}.
\]
In 2007, Huang and Tam  conjectured that the Frobenius norm of the self-commutator is contractive under
$\Delta_\lambda$: for every $0<\lambda<1$,
\[
\|\comm{A^*}{A}\|_{F}
\ \ge\
\|\comm{\Delta_\lambda(A)^*}{\Delta_\lambda(A)}\|_{F}.
\]
If this inequality held, then the iterated self-commutator norms
\[
\Bigl\{\bigl\|\comm{\Delta_\lambda^{\,m}(A)^*}{\Delta_\lambda^{\,m}(A)}\bigr\|_F\Bigr\}_{m\in\mathbb N}
\]
would form a nonincreasing sequence and necessarily converge to $0$.
In this paper we provide a counterexample, thereby disproving the conjecture.
We also obtain the quantitative bounds
\[
\sqrt{\frac32}\ \le\
\sup_{\substack{A\in\mnc,\ A^*A\neq AA^*\\ 0<\lambda<1}}
\frac{\|\comm{\Delta_\lambda(A)^*}{\Delta_\lambda(A)}\|_F}{\|\comm{A^*}{A}\|_F}
\ \le\ 2.
\]
	\end{abstract}
	
	\maketitle
	
	\section{Introduction}

	Let $\mnc$ be the set of all complex $n\times n$ matrices.
	For $A\in\mathbb M_n(\mathbb C)$, we use $A^*$ to denote its conjugate transpose.
	The trace of $A$ is defined by $\Tr A$, and the Frobenius (Hilbert--Schmidt) norm is
	\[
	\|A\|_F:=\sqrt{\Tr(A^*A)}.
	\]
	For $X,Y\in\mathbb M_n(\mathbb C)$, their commutator is
	\[
	[X,Y]:=XY-YX.
	\]
	In particular, the self-commutator of $A$ is
	\[
	[A^*,A]=A^*A-AA^*,
	\]
	which vanishes if and only if $A$ is normal.	For Hermitian matrices $X,Y\in\mathbb M_n(\mathbb C)$, we write
	$
	X\le Y
	$
	(in the Löwner order) if $Y-X$ is positive semidefinite.
	We also write $\diag(d_1,\dots,d_n)$ for the diagonal matrix with diagonal entries $d_1,\dots,d_n$.
	
	For every $A\in\mnc$, there exists a polar decomposition $A=UP$, where
	$P=|A|=(A^*A)^{1/2}\ge 0$ and $U$ is a partial isometry (unitary when $A$ is invertible).
	The (usual) Aluthge transform of $A$ \cite[p.~179]{AMS05} (see also \cite{Alu90}) is defined by
	\[
	\Delta(A)=P^{1/2}UP^{1/2}.
	\]
	Although the unitary factor in the polar decomposition may fail to be unique when $A$ is singular,
	the Aluthge transform is nevertheless well defined: it does not depend on the particular choice of
	the unitary factor. More generally, for $0<\lambda<1$ one defines the $\lambda$-Aluthge transform
	\cite[Def.~3.1, p.~181]{AMS05}
	\[
	\Delta_\lambda(A)=P^\lambda U P^{1-\lambda},
	\]
	so that $\Delta=\Delta_{1/2}$. Iterations of the Aluthge transform, and their convergence properties,
	have attracted considerable attention.
	
	Zhan \cite[Section~16]{Zha08} recorded two conjectures concerning the convergence of iterated Aluthge
	transforms in his remarkable problem collection, ``\emph{Open Problems in Matrix Theory}.''
	
	The first conjecture concerns the convergence of the Aluthge sequence.
	\begin{conj}[{\cite[Conjecture~20]{Zha08}}]\label{conj:aluthge-seq}
		Let $A\in \mathbb{M}_n(\mathbb{C})$. Then the sequence $\{\Delta^{m}(A)\}_{m=1}^\infty$ converges.
	\end{conj}
	
	Conjecture~\ref{conj:aluthge-seq} was originally conjectured by I.B. Jung, E. Ko and C. Pearcy in
	\cite{JKP00} in the operator setting on an infinite-dimensional Hilbert space. Cho, Jung and Lee
	\cite{CJL05} proved that the norm convergence of the Aluthge sequence may fail in the
	infinite-dimensional case. Ando and Yamazaki~\cite{AY03} verified
	Conjecture~\ref{conj:aluthge-seq} in the case where $A\in\mathbb{M}_2(\C)$; see~\cite{JKP03} for some
	special cases. Huang and Tam~\cite{HT07} proved that if the nonzero eigenvalues of $A$ have distinct
	moduli, then the $\lambda$-Aluthge sequence $\{\Delta_\lambda^{\,m}(A)\}_{m=1}^\infty$ converges.
	A major milestone toward Conjecture~\ref{conj:aluthge-seq} is the work of Antezana, Pujals and Stojanoff
	on the global dynamics of the Aluthge map. For diagonalizable matrices, they
	\cite{APS08} proved convergence of the iterated $\lambda$-Aluthge transform and
	gave a detailed description of the normal limit in terms of spectral data. In a subsequent paper
	\cite{APS11}, they established convergence of the (usual) Aluthge iterates for
	every complex matrix, providing a definitive finite-dimensional convergence theorem, thereby
	confirming Conjecture~\ref{conj:aluthge-seq}. Related ``Aluthge iterations'' have also been studied
	beyond the matrix and Hilbert space framework: Huang and Tam \cite{HT10} initiated an analysis
	of Aluthge iteration in connected noncompact semisimple Lie groups, and Tam and Thompson
	\cite{TT14} later obtained convergence results for semisimple Lie groups from a
	Lie-theoretic/geometric perspective. More recently, Antezana and Lim \cite{AL26} studied
	the $\lambda$-Aluthge transform on $\mathrm{GL}_n$ as a smooth dynamical system (showing it is a
	$C^\infty$ diffeomorphism for $\lambda\neq \tfrac12$) and analyzed iterations of its inverse map.
	
	It is also known that $\Delta(A)$ and $A$ have the same eigenvalues, and that whenever the Aluthge
	sequence converges, its limit is necessarily normal; see \cite[Proposition~1.10]{JKP00},
	\cite[Proposition~3.1]{JKP03}, or \cite[Theorem~1]{And04}. These observations support the view that
	the Aluthge transform behaves as a ``normalizing'' procedure. Moreover, for a matrix
	$A\in \mathbb{M}_n(\mathbb{C})$, the self-commutator $\comm{A^*}{A}$ vanishes if and only if $A$ is normal,
	so norms of $\comm{A^*}{A}$ provide natural quantitative measures of non-normality. A precise formulation
	of this heuristic is given by the following second conjecture (see also the Huang--Tam original paper
	\cite[Conjecture~3.3]{HT07}), which predicts that the Frobenius norm of the self-commutator is contractive under
	$\Delta_\lambda$.  Assuming the conjecture holds, \cite[Theorem~1.4]{HT07} implies that the sequence
	$\Bigl\{\bigl\|\comm{\Delta_\lambda^{\,m}(A)^*}{\Delta_\lambda^{\,m}(A)}\bigr\|_F\Bigr\}_{m\in\mathbb N}$ is nonincreasing and converges to $0$. To the best of our knowledge, there has been little progress on this conjecture over the past two decades.

	\begin{conj}[{\cite[Conjecture~21]{Zha08}}]\label{conj:selfcomm}
		Let $A\in \mathbb{M}_n(\mathbb{C})$. Then for any $0<\lambda<1$,
		\begin{equation}\label{eq:main}
			\|\comm{A^*}{A}\|_{F}
			\ \ge\
			\|\comm{\Delta_\lambda(A)^*}{\Delta_\lambda(A)}\|_{F}.
		\end{equation}
	\end{conj}
	
	Besides convergence and Frobenius self-commutator contraction, a number of works investigate which
	operator-theoretic or dynamical properties are preserved under Aluthge-type transforms and their
	generalizations. Tran \cite{Tra22} proved that for invertible operators on Hilbert space, dynamical
	notions such as bounded shadowing and quasi-hyperbolicity are invariant under $\lambda$-Aluthge
	transforms; see also Morales and Linh \cite{ML24} for further connections between
	shadowing, hyperbolicity and Aluthge transforms in Banach space dynamics. On the
	structure-preserver side, Botelho, Moln\'ar and Nagy \cite{BMN16} characterized
	bijective linear maps between von Neumann factors that commute with the $\lambda$-Aluthge transform.
	Induced and multivariable versions have also been investigated: Golla, Osaka, Udagawa and Yamazaki
	\cite{GOUY23} studied stability of the absolutely norm attaining (AN) property
	for families of induced Aluthge transformations defined via operator means, Osaka and Yamazaki
	\cite{OY25} analyzed limits of iterates of induced Aluthge transformations for centered
	operators, and Benhida, Curto, Lee and Yoon \cite{BCLY22} determined the spectral
	picture and joint spectral radius for a generalized spherical Aluthge transform of commuting
	$d$-tuples.

In this paper, we show that Conjecture~\ref{conj:selfcomm} fails already for the usual Aluthge transform ($\lambda=\tfrac12$).
We then propose the following two questions: 
\begin{ques}\label{ques:1}
Fix  $0<\lambda<1$. Determine the smallest constant $C_\lambda\ge 1$ such that for all $A\in \mathbb{M}_n(\mathbb{C})$,
	\begin{equation}\label{eq:mainC}
		C_\lambda\,
		\|\comm{A^*}{A}\|_{F}
		\ \ge\
		\|\comm{\Delta_\lambda(A)^*}{\Delta_\lambda(A)}\|_{F}.
	\end{equation}
\end{ques}
\begin{ques}\label{ques:2}
 Determine the smallest constant $C_*$, independent of $\lambda\in(0,1)$, such that for all $A\in \mathbb{M}_n(\mathbb{C})$ and all $0<\lambda<1$,
	\begin{equation}\label{eq:mainC-star}
		C_*\,
		\|\comm{A^*}{A}\|_{F}
		\ \ge\
		\|\comm{\Delta_\lambda(A)^*}{\Delta_\lambda(A)}\|_{F}.
	\end{equation}
\end{ques}

Clearly,
\[
C_*=\sup_{0<\lambda<1} C_\lambda.
\]

For Question~\ref{ques:1}, in the case of the usual Aluthge transform, we show that
\[
\sqrt{\frac{1+\sqrt2}{2}}\le C_{1/2}\le 2.
\]
For Question~\ref{ques:2}, we prove the two-sided estimate
\[
\sqrt{\frac32}\ \le\ C_*\ \le\ 2.
\]

\noindent\textbf{Methods and main ideas.}
The counterexample in Section~\ref{sec:counterexample} is based on choosing $A=UP$ with $U$ a cyclic permutation unitary and
$P$ diagonal, so that $A^*A=P^2$ and $AA^*=UP^2U^*$ are both diagonal and the Frobenius norms of the
self-commutators can be computed explicitly.
For the lower bounds in Section~\ref{sec:sharp} we work within a weighted cyclic shift family $A_{\varepsilon,s}=UP$,
derive closed-form expressions for $\|\,[A_{\varepsilon,s}^*,A_{\varepsilon,s}]\,\|_F$ and
$\|\,[\Delta_\lambda(A_{\varepsilon,s})^*,\Delta_\lambda(A_{\varepsilon,s})]\,\|_F$, and then optimize
asymptotically along suitable parameter regimes.
For the upper bound in Section~\ref{sec:upper}, we estimate the difference between $A^*A$ and
$\Delta_\lambda(A)^*\Delta_\lambda(A)$ (and similarly for $\Delta_\lambda(A)\Delta_\lambda(A)^*$)
through a Frobenius-norm Heinz-type deviation inequality proved using spectral decompositions and
Hilbert--Schmidt orthogonality, and then conclude by a triangle inequality argument.	

\medskip
\noindent\textbf{Organization of this paper.}
In Section~\ref{sec:counterexample} we present an explicit $4\times 4$ counterexample showing that
$\|\,[A^*,A]\,\|_F$ need not be contractive under the (usual) Aluthge transform.
In Section~\ref{sec:sharp} we introduce a two-parameter family of weighted cyclic shifts and use it to obtain sharp
lower bounds for the optimal constants $C_{1/2}$ and $C_*$ within this model family.
Finally, in Section~\ref{sec:upper} we prove a uniform upper bound $C_\lambda\le 2$ for all $0<\lambda<1$ via a
Heinz-type deviation inequality for the Frobenius norm.
	
	\section{A counterexample to Conjecture~\ref{conj:selfcomm} for $\lambda=\tfrac12$}\label{sec:counterexample}
	In this section, we provide a counterexample to Conjecture~\ref{conj:selfcomm} for $\lambda=\tfrac12$. 
	\begin{prop}\label{prop:counter}
		Let
		\[
		A=
		\begin{pmatrix}
			0&0&0&36\\
			1&0&0&0\\
			0&36&0&0\\
			0&0&49&0
		\end{pmatrix}\in \mathbb{M}_4(\mathbb{C}),
		\qquad \lambda=\tfrac12.
		\]
		Then $A$ is invertible and
		\[
		\|\comm{A^*}{A}\|_F
		\;<\;
		\|\comm{\Delta(A)^*}{\Delta(A)}\|_F,
		\]
		so the inequality in \eqref{eq:main} fails (already for the usual Aluthge transform).
	\end{prop}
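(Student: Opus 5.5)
The plan is to exploit the weighted cyclic shift structure of $A$, so that every quantity in the statement is a diagonal matrix and can be read off directly. Write $e_1,\dots,e_4$ for the standard basis and $U$ for the cyclic permutation unitary $Ue_i=e_{i+1}$ (indices mod $4$). Then $Ae_1=e_2$, $Ae_2=36e_3$, $Ae_3=49e_4$, $Ae_4=36e_1$, so
\[
A=UP,\qquad P=\diag(1,36,49,36).
\]
Since $P>0$ and $U$ is unitary, $A$ is invertible. This is exactly its polar decomposition, because $A^*A=PU^*UP=P^2$, hence $|A|=P$.

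\textbf{Self-commutator of $A$.} We have $A^*A=P^2=\diag(1,1296,2401,1296)$ and
\[
AA^*=UP^2U^*=\diag(p_4^2,p_1^2,p_2^2,p_3^2)=\diag(1296,1,1296,2401).
\]
Hence
\[
[A^*,A]=\diag(-1295,\,1295,\,1105,\,-1105),
\]
and therefore
\[
\|[A^*,A]\|_F^2=2(1295^2+1105^2)=2\cdot 2898050 .
\]

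\textbf{Self-commutator of $\Delta(A)$.} The Aluthge transform is $\Delta(A)=P^{1/2}UP^{1/2}$. It sends $e_i$ to $\sqrt{p_ip_{i+1}}\,e_{i+1}$, so it is again a weighted cyclic shift $\Delta(A)=UQ$ with
\[
Q=\diag\bigl(\sqrt{1\cdot 36},\sqrt{36\cdot49},\sqrt{49\cdot36},\sqrt{36\cdot1}\bigr)=\diag(6,42,42,6).
\]
Repeating the computation above gives
\[
\Delta(A)^*\Delta(A)=Q^2=\diag(36,1764,1764,36),\qquad \Delta(A)\Delta(A)^*=UQ^2U^*=\diag(36,36,1764,1764).
\]
So
\[
[\Delta(A)^*,\Delta(A)]=\diag(0,1728,0,-1728),
\]
and
\[
\|[\Delta(A)^*,\Delta(A)]\|_F^2=2\cdot1728^2=2\cdot 2985984 .
\]

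\textbf{Comparison.} Since $2985984>2898050$, the strict inequality $\|[A^*,A]\|_F<\|[\Delta(A)^*,\Delta(A)]\|_F$ follows. The only real obstacle is bookkeeping: the index shift in $UP^2U^*$ and the weights $\sqrt{p_ip_{i+1}}$ must be aligned correctly. The check $Ae_i=p_ie_{i+1}$ against the matrix entries fixes these conventions. Everything else is exact integer arithmetic; the weights become integers because $1\cdot36$ and $36\cdot49$ are perfect squares.
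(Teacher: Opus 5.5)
Your proposal is correct and follows essentially the same route as the paper: you write $A=UP$ with the cyclic shift $U$ and $P=\diag(1,36,49,36)$, observe that both self-commutators are diagonal, and compare $2(1295^2+1105^2)=5{,}796{,}100$ with $2\cdot1728^2=5{,}971{,}968$. Your only cosmetic difference is packaging $\Delta(A)$ as the weighted shift $UQ$ with $Q=\diag(6,42,42,6)$, where the paper expands $P^{1/2}U^*PUP^{1/2}$ and $P^{1/2}UPU^*P^{1/2}$ directly; the resulting diagonal matrices are the same.
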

	
	\begin{proof}
		Set
		\[
		U=
		\begin{pmatrix}
			0&0&0&1\\
			1&0&0&0\\
			0&1&0&0\\
			0&0&1&0
		\end{pmatrix},
		\qquad
		P=\diag(1,36,49,36).
		\]
		Then $U$ is unitary and one checks that $A=UP$. Hence $A$ is invertible and
	$|A|=P$.
		Therefore the usual Aluthge transform is
		\[
		\Delta(A)=|A|^{1/2}U|A|^{1/2}=P^{1/2}UP^{1/2},
		\qquad
		P^{1/2}=\diag(1,6,7,6).
		\]
		
		We first compute the self-commutator of $A$. Since
		\[
		A^*A=P^2=\diag(1,1296,2401,1296),
		\qquad
		AA^*=UP^2U^*=\diag(1296,1,1296,2401),
		\]
		it follows that
		\[
		\comm{A^*}{A}
		=
		\diag(-1295,\,1295,\,1105,\,-1105),
		\]
		and hence
		\[
		\|\comm{A^*}{A}\|_F^2
		=
		2\cdot1295^2+2\cdot1105^2
		=
		5{,}796{,}100.
		\]
		
		Next we compute the self-commutator of $\Delta(A)$. We have
		\[
		\Delta(A)^*\Delta(A)=P^{1/2}U^*PU P^{1/2}
		=\diag(36,1764,1764,36),
		\]
		and
		\[
		\Delta(A)\Delta(A)^*=P^{1/2}UPU^*P^{1/2}
		=\diag(36,36,1764,1764).
		\]
		Therefore
		\[
		\comm{\Delta(A)^*}{\Delta(A)}
		=
		\diag(0,\,1728,\,0,\,-1728),
		\]
		so
		\[
		\|\comm{\Delta(A)^*}{\Delta(A)}\|_F^2
		=
		2\cdot 1728^2
		=
		5{,}971{,}968.
		\]
		Since $5{,}971{,}968>5{,}796{,}100$, we obtain
		\[
		\|\comm{\Delta(A)^*}{\Delta(A)}\|_F
		>
		\|\comm{A^*}{A}\|_F,
		\]
		which proves the claim.
	\end{proof}

	\section{Sharp lower bounds for the optimal constants}\label{sec:sharp}
	We will use the following cyclic shift model.
	\begin{prop}\label{prop:family}
		Let $\varepsilon>0$ and $s>0$, and define
		\[
		U=
		\begin{pmatrix}
			0&0&0&1\\
			1&0&0&0\\
			0&1&0&0\\
			0&0&1&0
		\end{pmatrix},
		\qquad
		P=\diag(\varepsilon,1,s,1),
		\qquad
		A_{\varepsilon,s}:=UP\in\mathbb M_4(\C).
		\]
		Then $A_{\varepsilon,s}$ is invertible and $|A_{\varepsilon,s}|=P$.
		Moreover,
		\begin{align}
			\|A_{\varepsilon,s}^*A_{\varepsilon,s}-A_{\varepsilon,s}A_{\varepsilon,s}^*\|_F^2
			&=
			2(1-\varepsilon^2)^2+2(s^2-1)^2, \label{eq:den}\\[2mm]
			\|\Delta_\lambda(A_{\varepsilon,s})^*\Delta_\lambda(A_{\varepsilon,s})
			-\Delta_\lambda(A_{\varepsilon,s})\Delta_\lambda(A_{\varepsilon,s})^*\|_F^2
			&=
			(\varepsilon^{2-2\lambda}-\varepsilon^{2\lambda})^2+(s^{2\lambda}-\varepsilon^{2-2\lambda})^2 \nonumber\\
			&\quad+(s^{2-2\lambda}-s^{2\lambda})^2+(\varepsilon^{2\lambda}-s^{2-2\lambda})^2. \label{eq:num}
		\end{align}
	\end{prop}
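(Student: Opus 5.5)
Since $U$ is a permutation matrix and $P$ is a positive diagonal matrix, $A_{\varepsilon,s}=UP$ is a product of invertible matrices, hence invertible. Also $A_{\varepsilon,s}^*A_{\varepsilon,s}=PU^*UP=P^2$, so $|A_{\varepsilon,s}|=(P^2)^{1/2}=P$ by uniqueness of the positive square root. For invertible $A_{\varepsilon,s}$ the polar factor is unique, so $U$ is the unitary factor and
\[
\Delta_\lambda(A_{\varepsilon,s})=P^{\lambda}UP^{1-\lambda}.
\]
The plan is then to reduce everything to diagonal matrices, as in the proof of Proposition~\ref{prop:counter}.

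The key observation is the conjugation rule for this cyclic $U$. Since $Ue_j=e_{j+1}$ (indices mod $4$), for $D=\diag(d_1,d_2,d_3,d_4)$ we have
\[
UDU^*=\diag(d_4,d_1,d_2,d_3),\qquad U^*DU=\diag(d_2,d_3,d_4,d_1).
\]
For \eqref{eq:den}: $A^*A=P^2=\diag(\varepsilon^2,1,s^2,1)$ and $AA^*=UP^2U^*=\diag(1,\varepsilon^2,1,s^2)$. Their difference is $\diag(\varepsilon^2-1,\,1-\varepsilon^2,\,s^2-1,\,1-s^2)$, and summing the squares gives \eqref{eq:den}.

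For \eqref{eq:num}, write $B=\Delta_\lambda(A_{\varepsilon,s})$. Then
\[
B^*B=P^{1-\lambda}\,(U^*P^{2\lambda}U)\,P^{1-\lambda},\qquad BB^*=P^{\lambda}\,(UP^{2-2\lambda}U^*)\,P^{\lambda}.
\]
Both are products of diagonal matrices, so they are diagonal and can be computed entrywise. With $P^{2\lambda}=\diag(\varepsilon^{2\lambda},1,s^{2\lambda},1)$, we get $U^*P^{2\lambda}U=\diag(1,s^{2\lambda},1,\varepsilon^{2\lambda})$, and multiplying by $P^{2-2\lambda}=\diag(\varepsilon^{2-2\lambda},1,s^{2-2\lambda},1)$ gives
\[
B^*B=\diag(\varepsilon^{2-2\lambda},\,s^{2\lambda},\,s^{2-2\lambda},\,\varepsilon^{2\lambda}).
\]
Similarly, $UP^{2-2\lambda}U^*=\diag(1,\varepsilon^{2-2\lambda},1,s^{2-2\lambda})$, and multiplying by $P^{2\lambda}$ gives
\[
BB^*=\diag(\varepsilon^{2\lambda},\,\varepsilon^{2-2\lambda},\,s^{2\lambda},\,s^{2-2\lambda}).
\]
Subtracting entrywise yields the four differences
\[
\varepsilon^{2-2\lambda}-\varepsilon^{2\lambda},\quad s^{2\lambda}-\varepsilon^{2-2\lambda},\quad s^{2-2\lambda}-s^{2\lambda},\quad \varepsilon^{2\lambda}-s^{2-2\lambda},
\]
and the sum of their squares is exactly \eqref{eq:num}.

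There is no real obstacle here. The only place errors can enter is the direction of the cyclic shift, i.e.\ keeping track of which index each entry moves to under $U^*(\cdot)U$ versus $U(\cdot)U^*$. I would check it against Proposition~\ref{prop:counter}: the case $\varepsilon=1$, $s=49/36$ is, after scaling, the same $P$ pattern as there, and at $\lambda=1/2$ it should reproduce the diagonal pattern found in that proof.
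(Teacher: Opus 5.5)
Your proposal is correct and follows the paper's proof. You get $|A_{\varepsilon,s}|=P$ from $A^*A=P^2$, and the cyclic conjugation rule makes $B^*B=\diag(\varepsilon^{2-2\lambda},s^{2\lambda},s^{2-2\lambda},\varepsilon^{2\lambda})$ and $BB^*=\diag(\varepsilon^{2\lambda},\varepsilon^{2-2\lambda},s^{2\lambda},s^{2-2\lambda})$ diagonal; these are exactly the consecutive cyclic shifts the paper cites as a ``direct calculation.'' One slip, affecting only your sanity check: $\diag(1,36,49,36)$ rescales to $\varepsilon=1/36$, $s=49/36$ (not $\varepsilon=1$), and with these values at $\lambda=\tfrac12$ your diagonals, multiplied by $36^2$, reproduce those in Proposition~\ref{prop:counter}.
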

	
	\begin{proof}
		Since $U$ is unitary and $P>0$ is diagonal, $A_{\varepsilon,s}=UP$ is invertible and
		$A_{\varepsilon,s}^*A_{\varepsilon,s}=P^2$, hence $|A_{\varepsilon,s}|=(A_{\varepsilon,s}^*A_{\varepsilon,s})^{1/2}=P$.
		The identity \eqref{eq:den} follows from
		$A_{\varepsilon,s}^*A_{\varepsilon,s}=P^2$ and $A_{\varepsilon,s}A_{\varepsilon,s}^*=UP^2U^*$
		(which permutes the diagonal entries cyclically).
		
		For \eqref{eq:num}, note that
		\[
		\Delta_\lambda(A_{\varepsilon,s})=P^\lambda U P^{1-\lambda}
		=
		\begin{pmatrix}
			0&0&0&\varepsilon^\lambda\\
			\varepsilon^{1-\lambda}&0&0&0\\
			0&s^\lambda&0&0\\
			0&0&s^{1-\lambda}&0
		\end{pmatrix}.
		\]
		This is a weighted cyclic shift. A direct calculation gives that both
		$\Delta_\lambda(A_{\varepsilon,s})^*\Delta_\lambda(A_{\varepsilon,s})$ and
		$\Delta_\lambda(A_{\varepsilon,s})\Delta_\lambda(A_{\varepsilon,s})^*$ are diagonal, and their diagonal
		entries are consecutive cyclic shifts of the squared weights
		$\varepsilon^{2-2\lambda}, s^{2\lambda}, s^{2-2\lambda}, \varepsilon^{2\lambda}$, yielding \eqref{eq:num}.
	\end{proof}
	Next, we give  a sharp lower bound for $C_{1/2}$.
	\begin{thm}\label{thm:half}
		For $\lambda=\tfrac12$ in \eqref{eq:mainC},
		\[
		C_{1/2}
		\ \ge\
		\sqrt{\frac{1+\sqrt2}{2}}.
		\]
Moreover, within the family $A_{\varepsilon,s}$ of Proposition~\ref{prop:family},
\[
\sup_{s>0}\ \lim_{\varepsilon\to 0^+}
\frac{\|\Delta(A_{\varepsilon,s})^*\Delta(A_{\varepsilon,s})-\Delta(A_{\varepsilon,s})\Delta(A_{\varepsilon,s})^*\|_F}
{\|A_{\varepsilon,s}^*A_{\varepsilon,s}-A_{\varepsilon,s}A_{\varepsilon,s}^*\|_F}
=
\sqrt{\frac{1+\sqrt2}{2}},
\]
and this value is approached by taking $s=2^{1/4}$ and letting $\varepsilon\to 0^+$.
	\end{thm}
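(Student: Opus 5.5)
Specialize Proposition~\ref{prop:family} to $\lambda=\tfrac12$. All exponents $2\lambda$ and $2-2\lambda$ become $1$, so \eqref{eq:num} simplifies to
\[
(\varepsilon-\varepsilon)^2+(s-\varepsilon)^2+(s-s)^2+(\varepsilon-s)^2=2(s-\varepsilon)^2 ,
\]
while \eqref{eq:den} stays $2(1-\varepsilon^2)^2+2(s^2-1)^2$. The squared ratio is therefore
\[
R_{\varepsilon}(s)=\frac{(s-\varepsilon)^2}{(1-\varepsilon^2)^2+(s^2-1)^2}.
\]
Both numerator and denominator are continuous in $\varepsilon\ge 0$, and the denominator is at least $(1-\varepsilon^2)^2>0$ for small $\varepsilon$. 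Hence the limit as $\varepsilon\to0^+$ exists and equals
\[
f(s)=\frac{s^2}{1+(s^2-1)^2}.
\]

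Next, maximize $f$ over $s>0$. Put $t=s^2>0$, so $f=g(t)=t/(t^2-2t+2)$. Then
\[
g'(t)\propto (t^2-2t+2)-t(2t-2)=2-t^2 ,
\]
so $g$ is maximized at $t=\sqrt2$, i.e.\ $s=2^{1/4}$, and is increasing before and decreasing after this point. The maximum value is
\[
g(\sqrt2)=\frac{\sqrt2}{4-2\sqrt2}=\frac{1}{2\sqrt2-2}=\frac{\sqrt2+1}{2}.
\]
Taking square roots gives $\sup_{s>0}\lim_{\varepsilon\to0^+}(\text{ratio})=\sqrt{(1+\sqrt2)/2}$, attained at $s=2^{1/4}$. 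This is the ``moreover'' statement.

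Finally, the lower bound on $C_{1/2}$ follows. For each fixed $\varepsilon>0$ and $s>0$ with $A_{\varepsilon,s}$ non-normal (true for $s=2^{1/4}$ and small $\varepsilon$, since the denominator is positive), the definition of $C_{1/2}$ in \eqref{eq:mainC} gives
\[
C_{1/2}^2\ge R_\varepsilon(2^{1/4}).
\]
Letting $\varepsilon\to0^+$ yields $C_{1/2}\ge\sqrt{(1+\sqrt2)/2}$. There is no real obstacle. The only care needed is to check the simplification of \eqref{eq:num} at $\lambda=\tfrac12$ and to justify passing to the limit, which is immediate from continuity with a nonvanishing denominator.
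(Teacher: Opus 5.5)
Your proposal is correct and follows the paper's proof essentially step for step. You specialize \eqref{eq:num} to $2(s-\varepsilon)^2$, let $\varepsilon\to0^+$, and maximize $t/(t^2-2t+2)$ at $t=\sqrt2$ to get $(1+\sqrt2)/2$. Your extra remarks on why the limit exists and why $A_{\varepsilon,2^{1/4}}$ is non-normal are fine, and slightly more explicit than the paper.
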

	
	\begin{proof}
		When $\lambda=\tfrac12$, \eqref{eq:num} simplifies to
		\[
		\|\Delta(A_{\varepsilon,s})^*\Delta(A_{\varepsilon,s})-\Delta(A_{\varepsilon,s})\Delta(A_{\varepsilon,s})^*\|_F^2
		=
		2(s-\varepsilon)^2,
		\]
		so the ratio of squares equals
		\[
		R(\varepsilon,s)^2
		:=
		\frac{\|\Delta(A_{\varepsilon,s})^*\Delta(A_{\varepsilon,s})-\Delta(A_{\varepsilon,s})\Delta(A_{\varepsilon,s})^*\|_F^2}
		{\|A_{\varepsilon,s}^*A_{\varepsilon,s}-A_{\varepsilon,s}A_{\varepsilon,s}^*\|_F^2}
		=
		\frac{(s-\varepsilon)^2}{(1-\varepsilon^2)^2+(s^2-1)^2}.
		\]
		Letting $\varepsilon\to 0^+$ yields
		\[
		\lim_{\varepsilon\to 0^+}R(\varepsilon,s)^2
		=
		\frac{s^2}{1+(s^2-1)^2}.
		\]
		Writing $x=s^2>0$, we maximize
		\[
		\phi(x)=\frac{x}{1+(x-1)^2}=\frac{x}{x^2-2x+2}.
		\]
		A derivative calculation gives
		$\phi'(x)=\frac{-x^2+2}{(x^2-2x+2)^2}$, hence $\phi$ attains its maximum at $x=\sqrt2$ and
		\[
		\max_{x>0}\phi(x)=\phi(\sqrt2)=\frac{1+\sqrt2}{2}.
		\]
	Thus, within this family the ratio can be made arbitrarily close to
	$\sqrt{\frac{1+\sqrt2}{2}}$, and therefore $C_{1/2}\ge \sqrt{\frac{1+\sqrt2}{2}}$.
	\end{proof}
	Now, we present an uniform sharp lower bound on $C_*$.
	\begin{thm}\label{thm:uniform}
		The uniform optimal constant in \eqref{eq:mainC-star} satisfies
		\[
		C_*
		\ \ge\
		\sqrt{\frac32}.
		\]
Moreover, within the family $A_{\varepsilon,s}$ of Proposition~\ref{prop:family},
\[
\sup_{\substack{s>0\\0<\lambda<1}}\ \lim_{\varepsilon\to 0^+}
\frac{\|\Delta_\lambda(A_{\varepsilon,s})^*\Delta_\lambda(A_{\varepsilon,s})-\Delta_\lambda(A_{\varepsilon,s})\Delta_\lambda(A_{\varepsilon,s})^*\|_F}
{\|A_{\varepsilon,s}^*A_{\varepsilon,s}-A_{\varepsilon,s}A_{\varepsilon,s}^*\|_F}
=
\sqrt{\frac32},
\]
and this value is approached by taking $s=\sqrt2$, letting $\lambda\to 0^+$ (or $\lambda\to 1^-$),
and letting $\varepsilon\to 0^+$.
	\end{thm}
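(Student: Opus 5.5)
We will work directly from the closed-form expressions \eqref{eq:den} and \eqref{eq:num} of Proposition~\ref{prop:family}. Fix $0<\lambda<1$ and $s>0$ and let $\varepsilon\to0^+$. Since $\lambda\in(0,1)$, both exponents $2\lambda$ and $2-2\lambda$ are positive, so $\varepsilon^{2\lambda}\to0$ and $\varepsilon^{2-2\lambda}\to0$. The limiting squared ratio is therefore
\[
F(\lambda,s):=\frac{s^{4\lambda}+(s^{2-2\lambda}-s^{2\lambda})^2+s^{4-4\lambda}}{2\bigl(1+(s^2-1)^2\bigr)}.
\]
Expanding the middle square, the numerator becomes $2s^{4\lambda}+2s^{4-4\lambda}-2s^2$. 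Hence
\[
F(\lambda,s)=\frac{s^{4\lambda}+s^{4-4\lambda}-s^{2}}{1+(s^2-1)^2}.
\]

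\textbf{Step 1: supremum over $\lambda$.} For fixed $s$, the map $\lambda\mapsto s^{4\lambda}+s^{4-4\lambda}$ is convex in $\lambda$, being a sum of exponentials. It is also symmetric under $\lambda\mapsto1-\lambda$. On $[0,1]$ it is therefore maximized at the endpoints, where it equals $1+s^4$, and the supremum over the open interval $(0,1)$ is approached as $\lambda\to0^+$ or $\lambda\to1^-$. This gives
\[
\sup_{0<\lambda<1}F(\lambda,s)=\frac{1+s^4-s^2}{1+(s^2-1)^2}=\frac{x^2-x+1}{x^2-2x+2},\qquad x=s^2>0.
\]

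\textbf{Step 2: supremum over $x$.} Set $\psi(x)=(x^2-x+1)/(x^2-2x+2)$. By the quotient rule, the numerator of $\psi'$ is
\[
(2x-1)(x^2-2x+2)-(x^2-x+1)(2x-2),
\]
which simplifies to $-x^2+2x$. Thus $\psi$ increases on $(0,2)$ and decreases on $(2,\infty)$, so its maximum is at $x=2$, i.e.\ $s=\sqrt2$. There $\psi(2)=3/2$. Taking square roots gives the stated value $\sqrt{3/2}$ and identifies the extremal regime ($s=\sqrt2$, $\lambda\to0^+$ or $\lambda\to1^-$, $\varepsilon\to0^+$).

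\textbf{Step 3: the bound $C_*\ge\sqrt{3/2}$.} For each admissible $\varepsilon,s,\lambda$, the matrix $A_{\varepsilon,s}$ is non-normal. Indeed, \eqref{eq:den} is positive unless $\varepsilon=s=1$, and it is positive for all small $\varepsilon$. The ratio for these matrices is therefore a legitimate lower bound for $C_*$. Since these ratios come arbitrarily close to $\sqrt{3/2}$, we conclude $C_*\ge\sqrt{3/2}$.

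The only delicate point is the order of the limits. The supremum in the statement is taken after $\varepsilon\to0$ for each fixed $\lambda$, which is exactly what Steps 1 and 2 compute. For the bound on $C_*$, one only needs to note that for any $\delta>0$ we can choose $\lambda$ close to $0$ and then $\varepsilon$ small so that the actual ratio exceeds $\sqrt{3/2}-\delta$. This follows from the continuity of \eqref{eq:num} and \eqref{eq:den} in $(\varepsilon,\lambda)$. I expect no substantial obstacle beyond this algebra.
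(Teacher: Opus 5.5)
Your proposal is correct and follows the paper's proof essentially step for step. Both arguments take the $\varepsilon\to0^+$ limit to obtain $F_\lambda(x)=(x^{2\lambda}+x^{2-2\lambda}-x)/(x^2-2x+2)$, use convexity in $\lambda$ to bound it by $H(x)=(x^2-x+1)/(x^2-2x+2)$, and maximize at $x=2$ to get $3/2$; your derivative numerator $-x^2+2x$ and your remarks on non-normality and the order of limits just spell out details the paper leaves implicit.
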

	
	\begin{proof}
		Fix $0<\lambda<1$ and set $x=s^2>0$. Letting $\varepsilon\to 0^+$ in \eqref{eq:num} gives
		\[
		\lim_{\varepsilon\to 0^+}
		\|\Delta_\lambda(A_{\varepsilon,s})^*\Delta_\lambda(A_{\varepsilon,s})-\Delta_\lambda(A_{\varepsilon,s})\Delta_\lambda(A_{\varepsilon,s})^*\|_F^2
		=
		2\bigl(x^{2\lambda}+x^{2-2\lambda}-x\bigr),
		\]
		while \eqref{eq:den} yields
		\[
		\lim_{\varepsilon\to 0^+}
		\|A_{\varepsilon,s}^*A_{\varepsilon,s}-A_{\varepsilon,s}A_{\varepsilon,s}^*\|_F^2
		=
		2(x^2-2x+2).
		\]
		Hence along $\varepsilon\to 0^+$ the ratio of squares converges to
		\[
		F_\lambda(x):=\frac{x^{2\lambda}+x^{2-2\lambda}-x}{x^2-2x+2}.
		\]
		For fixed $x>0$, the function $g(\lambda):=x^{2\lambda}+x^{2-2\lambda}$ is convex in $\lambda$ on $[0,1]$
		(since it is a sum of exponentials), so its maximum over $[0,1]$ is attained at $\lambda=0$ or $1$:
		\[
		x^{2\lambda}+x^{2-2\lambda}\le 1+x^2.
		\]
		Therefore
		\[
		F_\lambda(x)\le \frac{x^2-x+1}{x^2-2x+2}=:H(x).
		\]
		We now maximize $H(x)$ over $x>0$. A direct derivative computation shows that $H$ attains its maximum
		at $x=2$, with
		\[
		\max_{x>0}H(x)=H(2)=\frac{3}{2}.
		\]
		Consequently,
		\[
		\sup_{0<\lambda<1}\sup_{x>0}F_\lambda(x)=\frac32,
		\]
		and hence, within the family $A_{\varepsilon,s}$,
		the ratio of norms can be made arbitrarily close to $\sqrt{3/2}$ by taking $x=2$ (i.e.\ $s=\sqrt2$),
		letting $\lambda\to 0^+$ (or $1^-$), and letting $\varepsilon\to 0^+$.
This proves the stated sharp lower bound for $C_*$.
	\end{proof}
	
	\begin{rem}\label{rem:status}
		Theorems~\ref{thm:half} and~\ref{thm:uniform} identify the sharp lower bounds for the optimal constants
		$C_{1/2}$ and $C_*$ within the explicit cyclic shift family
		$A_{\varepsilon,s}$. In particular, any constant in \eqref{eq:mainC} valid for all matrices must satisfy
		$C_{1/2}\ge \sqrt{\frac{1+\sqrt2}{2}}$, and any  uniform constant in \eqref{eq:mainC-star} must satisfy
		$C_*\ge \sqrt{\frac32}$. Determining whether these lower bounds are the true global optima over all
		matrices remains open.
	\end{rem}

	\section{A uniform upper bound via a Heinz-type inequality}\label{sec:upper}
	
	In this section, we record a simple uniform upper bound for the optimal constants.
	The key input is a Heinz-type deviation estimate for the Frobenius norm. Such inequalities are
	reminiscent of Heinz-type bounds in matrix analysis; see, for instance, \cite{Kit07} for related
	commutator inequalities.
	
	\begin{lem}[Heinz-type deviation inequality]\label{lem:heinz-dev}
		Let $X,Y\in \mnc$ be positive semidefinite and let $0\le t\le 1$.
		Then
		\begin{equation}\label{eq:heinz-dev}
			\Bigl\|\,X^{\frac{1-t}{2}}\,Y^{t}\,X^{\frac{1-t}{2}}-X\,\Bigr\|_{F}
			\ \le\
			\|Y-X\|_{F}.
		\end{equation}
	\end{lem}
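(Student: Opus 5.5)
The plan is to reduce \eqref{eq:heinz-dev} to a scalar inequality by expanding both squared Frobenius norms as traces. Then I would write each trace in the joint spectral coordinates of $X$ and $Y$, using Hilbert--Schmidt orthogonality, and bound the single genuinely noncommutative term by a trace inequality.

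\textbf{Step 1 (trace expansion).} Put $\alpha=\frac{1-t}{2}$, so that $X^{\alpha}X^{t}X^{\alpha}=X$. Expanding $\|M\|_F^2=\Tr(M^*M)$ with Hermitian $M$ and using cyclicity of the trace gives
\[
\Bigl\|X^{\alpha}Y^{t}X^{\alpha}-X\Bigr\|_F^2=\Tr\bigl(X^{1-t}Y^{t}X^{1-t}Y^{t}\bigr)-2\Tr\bigl(X^{2-t}Y^{t}\bigr)+\Tr X^2 .
\]
Similarly,
\[
\|Y-X\|_F^2=\Tr Y^2-2\Tr(XY)+\Tr X^2 .
\]
So it suffices to show
\[
\Tr\bigl(X^{1-t}Y^{t}X^{1-t}Y^{t}\bigr)-2\Tr\bigl(X^{2-t}Y^{t}\bigr)\le \Tr Y^2-2\Tr(XY).
\]

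\textbf{Step 2 (the quartic term).} The first trace is $\Tr\bigl((AB)^2\bigr)$ with $A=X^{1-t}\ge0$ and $B=Y^{t}\ge0$. By the standard inequality $\Tr\bigl((AB)^2\bigr)\le\Tr(A^2B^2)$ for positive semidefinite $A,B$, it is at most $\Tr\bigl(X^{2-2t}Y^{2t}\bigr)$. The standard inequality follows from Cauchy--Schwarz for the Hilbert--Schmidt inner product applied to $A^{1/2}BA^{1/2}$, or from Lieb--Thirring. I expect this to be the only step with real noncommutative content, and I would double-check that inequality carefully. After it, every remaining term has the form $\Tr\bigl(f(X)g(Y)\bigr)$.

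\textbf{Step 3 (spectral coordinates).} Write the spectral decompositions $X=\sum_i a_i P_i$ and $Y=\sum_j b_j Q_j$ with rank-one projections $P_i,Q_j$. Set $w_{ij}=\Tr(P_iQ_j)\ge0$; the matrix $(w_{ij})$ is doubly stochastic. Then $\Tr\bigl(f(X)g(Y)\bigr)=\sum_{i,j}w_{ij}f(a_i)g(b_j)$. This is the Hilbert--Schmidt orthogonality of the family $\{P_iQ_j\}$, whose squared norms are exactly $w_{ij}$. Because $\sum_i w_{ij}=1$, we also have $\Tr Y^2=\sum_{i,j}w_{ij}b_j^2$. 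Hence it suffices to prove, for all scalars $a,b\ge0$,
\[
a^{2-2t}b^{2t}-2a^{2-t}b^{t}\le b^2-2ab .
\]
Adding $a^2$ to both sides, this is equivalent to $\bigl(a^{1-t}b^{t}-a\bigr)^2\le (b-a)^2$.

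\textbf{Step 4 (scalar case).} The number $a^{1-t}b^{t}$ is a weighted geometric mean of $a$ and $b$, so it lies between $\min(a,b)$ and $\max(a,b)$. Therefore $|a^{1-t}b^{t}-a|\le|b-a|$, which finishes the proof. The endpoint cases $t=0,1$ and zero eigenvalues (with the convention $0^0=1$ for the support projections) should be checked. At $t=0$ both sides of the scalar inequality are handled trivially, and at $t=1$ the two sides coincide.
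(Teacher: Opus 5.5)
Your proof is correct and is essentially the paper's argument. Steps 1--2 prove exactly the paper's reduction $\|X^{\frac{1-t}{2}}Y^{t}X^{\frac{1-t}{2}}-X\|_F\le\|Y^{t}X^{1-t}-X\|_F$, since both reduce to $\Tr(X^{1-t}Y^tX^{1-t}Y^t)\le\Tr(X^{2-2t}Y^{2t})$ via Hilbert--Schmidt Cauchy--Schwarz. Steps 3--4 are the paper's spectral step, with its multiplier bound $0\le h_t\le 1$ recast as the fact that $a^{1-t}b^t$ lies between $a$ and $b$; working directly with nonnegative eigenvalues also lets you skip the paper's $\varepsilon$-perturbation to the positive definite case.
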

	
	\begin{proof}
		The cases $t=0$ and $t=1$ are immediate. Assume $0<t<1$.
		
		\smallskip
		\noindent\textbf{Step 1: reduction to a one-sided term.}
		Set $A:=Y^{t}-X^{t}$ (so $A=A^*$) and $B:=X^{1-t}\ge 0$. Then
		\[
		X^{\frac{1-t}{2}}\,Y^{t}\,X^{\frac{1-t}{2}}-X
		=
		X^{\frac{1-t}{2}}(Y^{t}-X^{t})X^{\frac{1-t}{2}}
		=
		B^{1/2}AB^{1/2}.
		\]
		Using the Hilbert--Schmidt inner product $\langle S,T\rangle=\Tr(T^*S)$ and Cauchy--Schwarz inequality, we have
		\begin{align*}
			\|B^{1/2}AB^{1/2}\|_F^2
			&=\Tr\,\bigl((B^{1/2}AB^{1/2})^*(B^{1/2}AB^{1/2})\bigr)\\
			&=\Tr\,\bigl((BA)^*(AB)\bigr)\\
			&\le \|BA\|_F\,\|AB\|_F.
		\end{align*}
		Since $A=A^*$, we also have $\|AB\|_F=\|BA\|_F$, because
		\[
		\|AB\|_F^2=\Tr\bigl((AB)^*(AB)\bigr)=\Tr(BA^2B)=\Tr(A^2B^2)
		=\Tr(AB^2A)=\|BA\|_F^2.
		\]
		Therefore,
		\[
		\|B^{1/2}AB^{1/2}\|_F\le \|AB\|_F.
		\]
		Noting that $AB=(Y^t-X^t)X^{1-t}=Y^tX^{1-t}-X$, we obtain
		\begin{equation}\label{eq:heinz-step1}
			\bigl\|X^{\frac{1-t}{2}}Y^{t}X^{\frac{1-t}{2}}-X\bigr\|_F
			\le
			\|Y^{t}X^{1-t}-X\|_F.
		\end{equation}
		
		\smallskip
		\noindent\textbf{Step 2: a spectral decomposition argument (positive definite case).}
		Assume first that $X$ and $Y$ are positive definite. Let
		\[
		X=\sum_{j=1}^{m}\mu_j P_j,\qquad
		Y=\sum_{i=1}^{k}\lambda_i Q_i
		\]
		be spectral decompositions, where $\mu_j>0$, $\lambda_i>0$, and $\{P_j\}$, $\{Q_i\}$ are families of
		mutually orthogonal projections with $\sum_j P_j=\sum_i Q_i=I$. Set $H:=Y-X$ (so $H=H^*$).
		Then for each $i,j$,
		\begin{equation}\label{eq:heinz-QHP}
			Q_iHP_j=Q_iYP_j-Q_iXP_j=(\lambda_i-\mu_j)Q_iP_j.
		\end{equation}
		Moreover,
		\[
		Y^tX^{1-t}-X
		=\sum_{i,j}\bigl(\lambda_i^t\mu_j^{1-t}-\mu_j\bigr)\,Q_iP_j.
		\]
		Introduce the scalar function
		\[
		h_t(r):=
		\begin{cases}
			\dfrac{r^t-1}{r-1}, & r\neq 1,\\[6pt]
			t, & r=1,
		\end{cases}
		\qquad (r>0).
		\]
		Using $\lambda_i^t\mu_j^{1-t}-\mu_j=(\lambda_i-\mu_j)\,h_t(\lambda_i/\mu_j)$ and \eqref{eq:heinz-QHP}, we can write
		\begin{equation}\label{eq:heinz-multiplier}
			Y^tX^{1-t}-X=\sum_{i,j} h_t\!\Bigl(\frac{\lambda_i}{\mu_j}\Bigr)\,Q_iHP_j.
		\end{equation}
		We claim that
		\begin{equation}\label{eq:heinz-ht-bound}
			0\le h_t(r)\le 1\qquad (r>0,\ 0\le t\le 1).
		\end{equation}
		Indeed, $r^t$ is increasing in $r$, hence $r^t-1$ and $r-1$ have the same sign, so $h_t(r)\ge 0$.
		If $r\ge 1$, then $t\le 1$ implies $r^t\le r$, hence $h_t(r)=(r^t-1)/(r-1)\le 1$.
		If $0<r\le 1$, then $t\le 1$ implies $r^t\ge r$, hence
		$h_t(r)=(1-r^t)/(1-r)\le 1$. Finally, $h_t(1)=t\in[0,1]$.
		
		Next we use Hilbert--Schmidt orthogonality. The family $\{Q_iHP_j\}_{i,j}$ is pairwise orthogonal:
		if $(i,j)\neq(i',j')$, then
		\[
\Tr\!\bigl((Q_iHP_j)^*(Q_{i'}HP_{j'})\bigr)
=\Tr\!\bigl(P_jH Q_iQ_{i'}H P_{j'}\bigr)=0.
		\]
	Indeed, if $i\neq i'$ then $Q_iQ_{i'}=0$; while if $i=i'$ and $j\neq j'$, then by cyclicity of the trace,
	\[
	\Tr\!\bigl(P_jH Q_iH P_{j'}\bigr)=\Tr\!\bigl(P_{j'}P_jH Q_iH\bigr)=0
	\]
	since $P_{j'}P_j=0$.
		Moreover, since $(\sum_i Q_i)H(\sum_j P_j)=H$, we have the orthogonal decomposition
		$H=\sum_{i,j}Q_iHP_j$, and therefore
		\begin{equation}\label{eq:heinz-H-decomp}
			\|H\|_F^2=\sum_{i,j}\|Q_iHP_j\|_F^2.
		\end{equation}
		Combining \eqref{eq:heinz-multiplier}, orthogonality, \eqref{eq:heinz-ht-bound}, and \eqref{eq:heinz-H-decomp}, we obtain
		\[
		\|Y^tX^{1-t}-X\|_F^2
		=
		\sum_{i,j}\Bigl|h_t\!\Bigl(\frac{\lambda_i}{\mu_j}\Bigr)\Bigr|^2\,\|Q_iHP_j\|_F^2
		\le
		\sum_{i,j}\|Q_iHP_j\|_F^2
		=\|H\|_F^2
		=\|Y-X\|_F^2.
		\]
		Hence
		\begin{equation}\label{eq:heinz-step2}
			\|Y^tX^{1-t}-X\|_F\le \|Y-X\|_F
			\qquad\text{when }X,Y>0.
		\end{equation}
		
		\smallskip
		\noindent\textbf{Step 3: the positive semidefinite case.}
		For $\varepsilon>0$ set $X_\varepsilon:=X+\varepsilon I$ and $Y_\varepsilon:=Y+\varepsilon I$.
		Then $X_\varepsilon,Y_\varepsilon>0$, so by \eqref{eq:heinz-step1}--\eqref{eq:heinz-step2} applied to $(X_\varepsilon,Y_\varepsilon)$ we have
		\[
		\bigl\|X_\varepsilon^{\frac{1-t}{2}}Y_\varepsilon^{t}X_\varepsilon^{\frac{1-t}{2}}-X_\varepsilon\bigr\|_F
		\le
		\|Y_\varepsilon-X_\varepsilon\|_F
		=\|Y-X\|_F.
		\]
		Finally, in finite dimension the functional calculus map $Z\mapsto Z^\alpha$ is continuous on the cone
		of positive semidefinite matrices for each $\alpha\in[0,1]$. Hence letting $\varepsilon\downarrow 0$ yields
		\eqref{eq:heinz-dev} for general $X,Y\ge 0$.
	\end{proof}
	
	\begin{thm}[Uniform upper bound]\label{thm:upper}
		For any $A\in \mnc$ and any $0<\lambda<1$,
		\begin{equation}\label{eq:upper-main}
			\|\comm{\Delta_\lambda(A)^*}{\Delta_\lambda(A)}\|_F
			\ \le\
			2\,\|\comm{A^*}{A}\|_F.
		\end{equation}
		In particular,
		\[
		C_\lambda\le 2\qquad (0<\lambda<1),
		\qquad
		C_*\le 2.
		\]
	\end{thm}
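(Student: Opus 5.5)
Write $A=UP$ with $P=|A|$, $U$ a partial isometry; if $A$ is singular, first choose $U$ unitary, which is allowed since $\Delta_\lambda(A)$ does not depend on that choice. Set $X:=A^*A=P^2$ and $Y:=AA^*=UP^2U^*$, so that $\comm{A^*}{A}=X-Y$. The plan is to compare $\Delta_\lambda(A)^*\Delta_\lambda(A)$ with $X$ and $\Delta_\lambda(A)\Delta_\lambda(A)^*$ with $Y$, each with error at most $\|X-Y\|_F$. The triangle inequality then gives
\[
\|\comm{\Delta_\lambda(A)^*}{\Delta_\lambda(A)}\|_F\le \|\Delta_\lambda(A)^*\Delta_\lambda(A)-X\|_F+\|X-Y\|_F+\|Y-\Delta_\lambda(A)\Delta_\lambda(A)^*\|_F.
\]
This naive route yields $3$, not $2$, so I will instead compare the two products with a common intermediate.

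First I compute the products. We have
\[
\Delta_\lambda(A)^*\Delta_\lambda(A)=P^{1-\lambda}U^*P^{2\lambda}UP^{1-\lambda}.
\]
Since $U^*P^{2\lambda}U=(U^*P^2U)^{\lambda}$, with $Z:=U^*P^2U=U^*YU$ this product equals $X^{\frac{1-\lambda}{2}}Z^{\lambda}X^{\frac{1-\lambda}{2}}$. Lemma~\ref{lem:heinz-dev} with $t=\lambda$ gives
\[
\|\Delta_\lambda(A)^*\Delta_\lambda(A)-X\|_F\le\|Z-X\|_F=\|U^*P^2U-P^2\|_F=\|P^2U-UP^2\|_F=\|X-Y\|_F,
\]
where the last equality uses unitarity, $\|P^2U-UP^2\|_F=\|(P^2-UP^2U^*)U\|_F$. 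Similarly,
\[
\Delta_\lambda(A)\Delta_\lambda(A)^*=P^\lambda UP^{2(1-\lambda)}U^*P^\lambda=X^{\frac{\lambda}{2}}Y^{1-\lambda}X^{\frac{\lambda}{2}}.
\]
Lemma~\ref{lem:heinz-dev} with $t=1-\lambda$ (so that $\frac{1-t}{2}=\frac{\lambda}{2}$) gives
\[
\|\Delta_\lambda(A)\Delta_\lambda(A)^*-X\|_F\le\|Y-X\|_F.
\]

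The key point is to compare both products with the \emph{same} $X$, rather than one with $X$ and the other with $Y$. The triangle inequality then gives
\[
\|\comm{\Delta_\lambda(A)^*}{\Delta_\lambda(A)}\|_F\le\|\Delta_\lambda(A)^*\Delta_\lambda(A)-X\|_F+\|X-\Delta_\lambda(A)\Delta_\lambda(A)^*\|_F\le 2\|X-Y\|_F=2\|\comm{A^*}{A}\|_F,
\]
which is \eqref{eq:upper-main}. The bounds $C_\lambda\le2$ and $C_*\le2$ then follow immediately from the definitions.

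The main obstacle is bookkeeping in the singular case. There we need $U$ unitary, and we need the identity $U^*P^{2\lambda}U=(U^*P^2U)^\lambda$, which holds for unitary conjugation by functional calculus. We also need $UP^2U^*=AA^*$, which holds because $UP=A$ regardless of how $U$ is extended on $\Ker P$. I would check carefully that extending the partial isometry to a unitary leaves $\Delta_\lambda(A)$ unchanged. This holds because $P^{1-\lambda}$ kills $\Ker P$, so only $U$ restricted to $\operatorname{ran}P$ matters.
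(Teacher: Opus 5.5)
Your proof is correct and is essentially the paper's argument. Both compare $\Delta_\lambda(A)^*\Delta_\lambda(A)$ and $\Delta_\lambda(A)\Delta_\lambda(A)^*$ with the common intermediate $A^*A$ via Lemma~\ref{lem:heinz-dev} (with $t=\lambda$ and $t=1-\lambda$) and conclude by the triangle inequality. The only cosmetic difference is that you apply the lemma to the pair $(P^2,U^*P^2U)$ directly, whereas the paper first conjugates by $U$.

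One harmless slip: with $U$ unitary, $U^*YU=P^2=X$, so you should write $Z=U^*P^2U=U^*XU$, not $U^*YU$. Your computation of $\|Z-X\|_F$ uses only $Z=U^*P^2U$ and is unaffected.
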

	
	\begin{proof}
		Let $A=U|A|$ be a polar decomposition. In finite dimension, we may  choose $U$ unitary:
		start with the canonical partial isometry in the polar decomposition and extend it arbitrarily to a
		unitary on $\Ker(|A|)$ (this does not change $A=U|A|$ since $|A|$ vanishes on $\Ker(|A|)$, nor does it
		affect $\Delta_\lambda(A)$ because $|A|^{1-\lambda}$ vanishes on $\Ker(|A|)$).
		
		Set
		\[
		H:=A^*A=|A|^2,\qquad K:=AA^*=UHU^*,
		\]
		so that $\|H-K\|_F=\|\comm{A^*}{A}\|_F$.
		A direct computation gives
		\begin{equation}\label{eq:upper-expand}
			\Delta_\lambda(A)^*\Delta_\lambda(A)=|A|^{1-\lambda}U^*|A|^{2\lambda}U|A|^{1-\lambda},
			\qquad
			\Delta_\lambda(A)\Delta_\lambda(A)^*=|A|^\lambda U|A|^{2-2\lambda}U^*|A|^\lambda.
		\end{equation}
		
		We first estimate $\Delta_\lambda(A)^*\Delta_\lambda(A)-A^*A$.
		Conjugating by $U$ and using $|A|^{2\lambda}=H^\lambda$ and $U|A|U^*=(U|A|^2U^*)^{1/2}=K^{1/2}$, we obtain
		\[
		U\bigl(\Delta_\lambda(A)^*\Delta_\lambda(A)-A^*A\bigr)U^*
		=
		K^{\frac{1-\lambda}{2}}\,H^\lambda\,K^{\frac{1-\lambda}{2}}-K.
		\]
		Hence, by unitary invariance of $\|\cdot\|_F$ and Lemma~\ref{lem:heinz-dev} (with $X=K$, $Y=H$,
		$t=\lambda$),
		\begin{equation}\label{eq:upper-step1}
			\|\Delta_\lambda(A)^*\Delta_\lambda(A)-A^*A\|_F
			\le
			\|H-K\|_F.
		\end{equation}
		
		Next, we estimate $A^*A-\Delta_\lambda(A)\Delta_\lambda(A)^*$.
		From \eqref{eq:upper-expand} we have
		\[
		\Delta_\lambda(A)\Delta_\lambda(A)^*
		=
		H^{\lambda/2}\,(U H^{1-\lambda}U^*)\,H^{\lambda/2}
		=
		H^{\lambda/2}\,K^{1-\lambda}\,H^{\lambda/2},
		\]
		and Lemma~\ref{lem:heinz-dev} (with $X=H$, $Y=K$, $t=1-\lambda$) yields
		\begin{equation}\label{eq:upper-step2}
			\|A^*A-\Delta_\lambda(A)\Delta_\lambda(A)^*\|_F
			=
			\|H^{\lambda/2}K^{1-\lambda}H^{\lambda/2}-H\|_F
			\le
			\|H-K\|_F.
		\end{equation}
		
		Finally, by the triangle inequality and \eqref{eq:upper-step1}--\eqref{eq:upper-step2},
		\[
		\|\comm{\Delta_\lambda(A)^*}{\Delta_\lambda(A)}\|_F
		\le
		\|\Delta_\lambda(A)^*\Delta_\lambda(A)-A^*A\|_F
		+\|A^*A-\Delta_\lambda(A)\Delta_\lambda(A)^*\|_F
		\le 2\|H-K\|_F,
		\]
		which is exactly \eqref{eq:upper-main}.
	\end{proof}
	
	\begin{cor}\label{cor:upper}
		Combining Theorem~\ref{thm:uniform} with Theorem~\ref{thm:upper} yields the quantitative bounds
		\[
		\sqrt{\frac32}\ \le\ C_*\ \le\ 2.
		\]
	\end{cor}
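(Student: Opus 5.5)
The corollary is the conjunction of two bounds already established, so the plan is to assemble them.

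\textbf{Lower bound.} By the definition in Question~\ref{ques:2}, $C_*$ is the smallest constant for which \eqref{eq:mainC-star} holds for all matrices and all $\lambda\in(0,1)$. Equivalently, $C_*$ equals the supremum of the ratio $\|\comm{\Delta_\lambda(A)^*}{\Delta_\lambda(A)}\|_F/\|\comm{A^*}{A}\|_F$ over all non-normal $A$ and all $0<\lambda<1$. Normal $A$ impose no constraint, since then $\Delta_\lambda(A)$ is normal and both sides vanish.

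Theorem~\ref{thm:uniform} exhibits the family $A_{\varepsilon,s}$ with $s=\sqrt2$, $\lambda\to0^+$ and $\varepsilon\to0^+$. Along this family the ratio converges to $\sqrt{3/2}$, so every admissible constant is at least $\sqrt{3/2}$.

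The only point to verify is that $A_{\varepsilon,s}$ is non-normal for these parameters, so that it genuinely competes in the supremum. By \eqref{eq:den}, the denominator squared is $2(1-\varepsilon^2)^2+2(s^2-1)^2$. This is positive for $s=\sqrt2$ and small $\varepsilon$, so the family is admissible. The limiting argument then gives $C_*\ge\sqrt{3/2}$.

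\textbf{Upper bound.} Theorem~\ref{thm:upper} gives \eqref{eq:upper-main} for every $A$ and every $\lambda$ with the constant $2$. Hence $C_\lambda\le 2$ for each $\lambda$. Using $C_*=\sup_\lambda C_\lambda$, recorded after Question~\ref{ques:2}, we get $C_*\le 2$.

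There is no real obstacle here. The proof is a matter of bookkeeping: identifying $C_*$ as a supremum of ratios over non-normal $A$, so that the asymptotic family from Theorem~\ref{thm:uniform} yields a lower bound for the optimal constant itself.
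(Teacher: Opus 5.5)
Your proposal is correct and matches the paper's argument: the lower bound $\sqrt{3/2}$ comes from the cyclic-shift family of Theorem~\ref{thm:uniform}, and the upper bound $2$ from Theorem~\ref{thm:upper}; your non-normality check on $A_{\varepsilon,\sqrt2}$ only spells out what the paper leaves implicit. One point of precision: the limits must be taken in the order used in Theorem~\ref{thm:uniform}, namely $\varepsilon\to0^+$ first for fixed $\lambda$ and then $\lambda\to0^+$, because sending $\lambda\to0^+$ first with $\varepsilon>0$ fixed gives $\Delta_\lambda(A_{\varepsilon,s})\to A_{\varepsilon,s}$ and hence ratio $1$.
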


	\section*{Declaration of competing interest}
	The author declares no competing interests.
	
	\section*{Data availability}
	No data was used for the research described in the article.
	
	\section*{Acknowledgments}
	The author is grateful to his advisor Minghua Lin for suggesting this problem. This work is supported by the China Scholarship Council, the Young Elite Scientists Sponsorship
	Program for PhD Students (China Association for Science and Technology), and the Fundamental Research
	Funds for the Central Universities at Xi'an Jiaotong University (Grant No.~xzy022024045).
	

\end{document}